\newtheorem{theorem}{Theorem}[section]
\newtheorem{proposition}{Proposition}
\theoremstyle{definition}
\def\rr{\mbox{$\mathbb{R}$}}
\title[Optimal control of an HIV model]{%
Optimal control of an HIV model
with a trilinear antibody growth function}
\author[K. Allali, S. Harroudi and D. F. M. Torres]{}
\subjclass{Primary: 49N90, 92D30; Secondary: 93A30.}
\keywords{HIV modeling, Adaptive immune response, Stability, Optimal control, Treatment.}
\email{allali@hotmail.com}
\email{sanaa.harroudi@gmail.com}
\email{delfim@ua.pt}
\thanks{$^*$ Corresponding author: Delfim F. M. Torres}
\begin{document}

\maketitle

\centerline{\scshape Karam Allali and Sanaa Harroudi}
\medskip
{\footnotesize
	\centerline{Laboratory of Mathematics and Applications}
	\centerline{Faculty of Sciences and Technologies}
	\centerline{University Hassan II of Casablanca}
	\centerline{P. O. Box 146, Mohammedia, Morocco}
}

\medskip

\centerline{\scshape Delfim F. M. Torres$^*$}
\medskip
{\footnotesize
	\centerline{Center for Research and Development in Mathematics and Applications (CIDMA)}
	\centerline{Department of Mathematics, University of Aveiro, 3810-193 Aveiro, Portugal}
}

\bigskip

% ---------------------------------------------

\begin{abstract}
We propose and study a new mathematical model of the human
immunodeficiency virus (HIV). The main
novelty is to consider that the antibody growth depends not only
on the virus and on the antibodies concentration but also on the
uninfected cells concentration. The model consists of five
nonlinear differential equations describing the
evolution of the uninfected cells, the infected ones, the free
viruses, and the adaptive immunity. The  adaptive immune response is
represented by the cytotoxic T-lymphocytes (CTL) cells and the
antibodies with the growth function supposed to be trilinear.
The model includes two kinds of treatments. The objective of the
first one is to reduce the number of infected cells, while the aim
of the second is to block free viruses. Firstly, the positivity
and the boundedness of solutions are established. After that, the
local stability of the disease free steady state and the infection
steady states are characterized. Next, an optimal control problem is posed
and investigated. Finally, numerical simulations are performed in
order to show the behavior of solutions and the effectiveness of the
two incorporated treatments via an efficient optimal control strategy.
\end{abstract}

% ---------------------------------------------

\section{Introduction}
\label{sec:1}

Human immunodeficiency virus (HIV) remains a worldwide health
problem that can cause the well known acquired immunodeficiency
syndrome (AIDS). Once it invades the body, HIV virus begins to
destruct the vast majority of CD4$^+$  T cells, often referred to as
``helper'' cells. These cells can be considered the command centers of
the immune system \cite{crs}. The immune system is represented by
the cytotoxic T lymphocytes (CTLs) and antibodies respond to their
message by attacking and killing the infected cells and HIV virus.
In the last decades, many mathematical models have been developed to
better describe and understand the dynamics of the HIV disease, e.g.,
\cite{MR3808514,Nov,Per1,MyID:359,MR3703337}. An HIV model with
adaptive immune response, two saturated rates, and therapy, is
studied in \cite{all}, showing that the goal of immunity response is
controlling the load of HIV viruses. Mathematical models of HIV and
tuberculosis coinfection have been carried out in
\cite{MR3804169,sil,MR3392642}. Models of HIV infection
using optimization techniques and optimal control in the study of
HIV have been investigated in \cite{roch,MR3721854,MR3714435}.
Recently, the same problem was tackled by introducing the HIV virus
dynamics into the system of equations in view of his importance in the
infection \cite{har}. Here, we continue the investigation of such
kind of problems by introducing antibodies immune response. Similar
models can be found in \cite{Nowak:May:2000}. Wodarz wrote an entire monograph
reviewing different models for CD8 cells \cite{MR2273003}. In 2013,
De Boer and Perelson have reviewed the existing literature on T-cell models
\cite{MR3046079}. For previous HIV modeling studies using optimal control theory
to determine optimal treatment protocols, we refer the reader to
\cite{har,MR3804169,roch,MR3721854,MR3872461,MR3392642,MR3714435}
and references therein. Finally, it should be mentioned that there is
abundant data on viral and T cell kinetics during HIV
and simian immune deficiency (SIV) infection
and the effects of therapy. For an example of an experimental
study that quantifies the effects of therapy, see, e.g.,
\cite{Davenport:2004}, where data on SIV and CTL
cell kinetics during primary monkey infection is provided.
For similar compartmental models in different contexts
see \cite{MyID:432,MR3831969}.
The main novelty here is to consider that the antibody growth
depends not only on the virus and on the antibodies concentration
but also on the uninfected cells concentration. That was never
investigated before, from a mathematical point of view,
but it is very important since the role of the immune
response to HIV infection has been recently recognized
by the medical literature to be of a great value. 
Indeed, it is now well known that the CTL immune response 
grows depending on the infected cells. 
This growth also depends on the number of CTL cells themselves. Moreover, 
the antibody immune response grows depending on the virus proliferation 
and this growth also depends on the number  of viruses. 
Because the growth of the immune system cells depends on the number 
of healthy target cells CD4+ T cells, hence the trilinear term 
to describe the growth of the immune responses \cite{crs,32,crs2}. 
The goal of HIV virus is to destruct CD4+ T cells, 
often named ``messengers'' or the command
centers of the immune system. Once the virus invades the body,
these cells give a signal to the immune system. The immune system
is represented by CTL and antibodies that respond to this message
and set out to eliminate the infection by killing infected cells and free virus.
To include into the model the antibodies participation in controlling the infection
is thus essential. The mathematical model we propose is the following one:
\begin{equation}
\label{sy}
\left\{ \begin{array}{llll}
\vspace{0.1cm} \displaystyle \frac{dx}{dt} = \lambda -dx- \beta xv, \\
\vspace{0.1cm} \displaystyle \frac{dy}{dt} = \beta xv- ay- pyz,\\
\vspace{0.1cm} \displaystyle \frac{dv}{dt} = aNy- \mu v- qvw , \\
\vspace{0.1cm} \displaystyle \frac{dz}{dt} = cxyz - hz,\\
\vspace{0.1cm} \displaystyle \frac{dw}{dt} = gxvw- \alpha w,\\
\end{array}
\right.
\end{equation}
with given initial conditions
\begin{equation}
\label{eq:IC}
x(0) = x_{0}, \quad y(0) = y_{0}, \quad v(0) = v_{0},
\quad z(0)= z_{0}, \quad w(0)= w_{0}.
\end{equation}
In this model, $x(t)$, $y(t)$, $v(t)$, $z(t)$, and $w(t)$, denote the
concentrations of uninfected cells, infected cells, HIV virus, CTL
cells, and antibodies at time $t$, respectively. The healthy CD$4^{+}$ T cells
$(x)$ grow at a rate $\lambda$, die at a rate $d$, and become
infected by the virus at a rate $\beta x v$. Infected cells $(y)$,
die at a rate $a$ and are killed by the CTLs response at a rate $p$.
Free virus $(v)$ is produced by the infected cells at a rate $aN$,
die at a rate $\mu$, and decay in the presence of antibodies at a
rate $q$, where $N$ is the number of free virus produced by each
actively infected cell during its life time. CTLs $(z)$ expand, in
response to viral antigen derived from infected cells, at a rate $c$
and decay in the absence of antigenic stimulation at a rate $h$.
Finally, antibodies $(w)$ develop in response to free virus
at a rate $g$ and decay at a rate $\alpha$. It is worthy to note 
that all the model rates are assumed to be nonnegative.

The paper is organized as follows. Section~\ref{sec:2} is devoted to
the existence, positivity, and boundedness of solutions. The analysis
of the model is carried out in Section~\ref{sec:3}. In Section~\ref{sec:4},
an HIV optimal control problem is posed and solved. Then, in Section~\ref{sec:5},
the results are illustrated through numerical simulations.
We finish with Section~\ref{sec:6} of conclusions.

% ---------------------------------------------

\section{Well-posedness of solutions}
\label{sec:2}

For problems dealing with cell population
evolution, the cell densities should remain non-negative and
bounded. In this section, we establish the positivity and
boundedness of solutions of the model \eqref{sy}. First of all, for
biological reasons, the parameters $x_0$, $y_0$, $v_0$, $z_0$, and
$w_0$, must be larger than or equal to zero. Hence, we have the
following result.

\begin{proposition}
\label{prop1}
The solutions of the problem \eqref{sy} exist. Moreover, they are
bounded and nonnegative for all $t > 0$.
\end{proposition}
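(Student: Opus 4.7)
The plan is to split the argument into three standard stages: local existence and uniqueness, invariance of the nonnegative orthant, and a priori boundedness of solutions (from which global existence then follows by the usual continuation principle).

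For the first stage, the right-hand side of \eqref{sy} is polynomial in $(x,y,v,z,w)$, hence of class $C^{\infty}$ and locally Lipschitz on $\mathbb{R}^{5}$, so the Picard--Lindel\"of theorem delivers a unique maximal solution on some interval $[0,T_{\max})$.

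For positivity, I would inspect the vector field on each coordinate hyperplane bounding $\mathbb{R}^{5}_{\geq 0}$: on $\{x=0\}$ one has $\dot x = \lambda \geq 0$; on $\{y=0\}$, $\dot y = \beta x v \geq 0$ whenever $x,v \geq 0$; and on $\{v=0\}$, $\dot v = aNy \geq 0$ whenever $y \geq 0$. For $z$ and $w$, the corresponding equations are linear in the dependent variable itself, so they integrate to the explicit representations
\begin{equation*}
z(t) = z_{0}\exp\!\int_{0}^{t}\!\bigl(cx(s)y(s)-h\bigr)\,ds,\qquad w(t) = w_{0}\exp\!\int_{0}^{t}\!\bigl(gx(s)v(s)-\alpha\bigr)\,ds,
\end{equation*}
which are manifestly nonnegative. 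Together these observations show that $\mathbb{R}^{5}_{\geq 0}$ is positively invariant.

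For boundedness, the idea is to build suitable linear combinations of the components whose derivatives satisfy differential inequalities of the form $\dot L \leq C - \kappa L$, to which a Gr\"onwall-type comparison applies. First, from $\dot x \leq \lambda - dx$ one immediately obtains $x(t) \leq M_{x} := \max\{x_{0},\lambda/d\}$. Next, for the $(y,z)$ block I would set $L_{1} := x + y + \theta_{1}z$ with the weight $\theta_{1} > 0$ chosen small enough that $\theta_{1}cM_{x} \leq p$; a direct computation then gives $\dot L_{1} \leq \lambda - \min(d,a,h)\,L_{1}$, so that $y$ and $z$ are uniformly bounded by some $M_{y},M_{z}$. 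For the $(v,w)$ block I would similarly set $L_{2} := \theta_{2}v + w$ with $\theta_{2}$ large enough that $gM_{x} \leq \theta_{2}q$; using the bound $y \leq M_{y}$ just obtained, this yields $\dot L_{2} \leq \theta_{2}aNM_{y} - \min(\mu,\alpha)\,L_{2}$, bounding $v$ and $w$ as well.

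The main delicacy is the choice of the weights $\theta_{1}$ and $\theta_{2}$: they must be tuned so that the sign-indefinite mixed products $yz(\theta_{1}cx-p)$ and $vw(gx-\theta_{2}q)$ appearing in $\dot L_{1}$ and $\dot L_{2}$ collapse into nonpositive contributions, which is only possible after the uniform bound on $x$ has been secured; this is where the trilinear structure of the model actually helps, since the cross terms in the $z$- and $w$-equations carry the very factor $x$ that is bounded a priori. Once uniform bounds on all five components are in hand, no solution can blow up in finite time, so $T_{\max}=+\infty$ and the proposition follows.
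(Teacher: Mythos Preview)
Your argument is correct; the positivity step is essentially the same as the paper's (you add the explicit exponential representations for $z$ and $w$, which is a nice touch), and you also spell out local existence and the continuation principle, which the paper leaves implicit.

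The boundedness argument, however, genuinely differs from the paper's. The paper first bounds $x+y$ via $\dot{(x+y)}\le\lambda-\min(d,a)(x+y)$, then bounds $v$ directly from the $v$-equation, and finally handles $z$ and $w$ by a substitution trick: it rewrites $cxyz=\tfrac{c}{p}x(\beta xv-ay-\dot y)$ using the $y$-equation (and similarly $gxvw$ using the $v$-equation), integrates against the exponential kernel, and invokes integration by parts on the $\dot y$ (resp.\ $\dot v$) term. Your route is a weighted Lyapunov-functional argument: you first isolate a uniform bound $M_x$ on $x$, then tune $\theta_1\le p/(cM_x)$ and $\theta_2\ge gM_x/q$ so that the trilinear cross terms in $\dot L_1$ and $\dot L_2$ acquire a definite sign and drop out. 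This is cleaner and more systematic---it yields a single Gr\"onwall inequality for each block and avoids the integration-by-parts step entirely---but it relies on having the bound on $x$ \emph{first}, which is exactly the structural observation you flag at the end. The paper's approach, by contrast, does not need the weights and would still go through if the bound on $x$ alone were unavailable (it only needs $x+y$ bounded), at the cost of a slightly more involved computation for $z$ and $w$.
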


\begin{proof}
First, we show that the nonnegative orthant
$\rr_{+}^{5}=\{(x,y,v,z,w)\in \rr^{5}: x\geq0, y\geq0, v\geq0,
z\geq0 \ \text{and}\ w\geq0\}$ is positively invariant. Indeed, for
$\left(x(t), y(t), v(t), z(t), w(t)\right) \in \rr_{+}^{5}$, we have:
$\dot x\mid _{x=0}= \lambda \geq0$, $\dot y\mid _{y=0}= \beta xv\geq0$,
$\dot v\mid _{v=0}= aNy\geq0$, $\dot z\mid_{z=0}=0\geq0$,
and $\dot w\mid _{w=0}=0\geq0$. Therefore, all
solutions initiating in $\rr_{+}^{5}$ are positive.
Next, we prove that these solutions remain bounded. Remark
that, by adding the two first equations in \eqref{sy}, we have
$\dot{x_1}= \lambda-dx-ay-pyz$, thus
\begin{equation*}
x_1(t)\leq x_1(0)e^{-\delta t}+\dfrac{\lambda}{\delta}(1-e^{-\delta t}),
\end{equation*}
where $x_{1}(t)=x(t)+y(t)$ and $\delta=\min(d;a)$.
Since $0\leq e^{-\delta t}\leq 1$ and $ 1-e^{-\delta t}\leq 1$,
we deduce that $x_{1}(t)\leq x_{1}(0)+\dfrac {\lambda}{\delta}$.
Therefore, $x$ and $y$ are bounded.
From the equation $\dot{v}= aNy -\mu v-qvw$, we have
\begin{equation*}
v(t)\leq v(0)e^{-\mu t}+ aN\int^{t}_{0}y(\xi)e^{(\xi - t)\mu} d\xi.
\end{equation*}
Then,
$$
v(t)\leq v(0)+ \dfrac{aN}{\mu}\left\|y\right\|_{\infty}(1-e^{-\mu t}).
$$
Since $1-e^{-\mu t}\leq 1$, we have $v(t)\leq v(0)+\dfrac
{aN}{\mu}\left\|y\right\|_{\infty}$. Thus, $v$ is bounded.
Now, we prove the boundedness of $z$. From the fourth equation of
\eqref{sy}, we have
$$
\dot{z}(t) + hz(t) = cx(t)y(t)z(t).
$$
Moreover, from the second equation of \eqref{sy}, it follows that
$$
\dot{z}(t) + hz(t) = \frac{c}{p}x(t)\left(\beta x(t)v(t)- ay(t)-
\dot{y}(t) \right).
$$
By integrating over time, we have
$$
z(t) = z(0) e^{-ht} + \int_{0}^t \frac{c}{p}x(s)\left(  \beta
x(s)v(s)- ay(s) - \dot{y}(s)\right) e^{h(s-t)}ds.
$$
From the boundedness of $x$, $y$, and $v$, and by using integration
by parts, it follows the boundedness of $z$.
The two equations $\dot{v}(t)= aNy(t)-\mu v(t)-qv(t)w(t)$
and $\dot{w}(t)=gx(t)v(t)w(t)- \alpha w(t)$ imply
$$
\dot{w}(t)+ \alpha w(t)=gx(t)v(t)w(t)
=\dfrac{g}{q} x(t) \left(aNy(t)-\dot{v}(t)
-\mu v(t) \right).
$$
Then,
\begin{equation*}
w(t) = w(0) e^{- \alpha t} + \int_{0}^t
\frac{g}{q}x(s)\left( aNy(s)- \mu v(s)
- \dot{v}(s)\right) e^{ \alpha(s-t)}ds.
\end{equation*}
From the boundedness of $x$, $y$, and $v$, and by integration
by parts, it follows the boundedness of $w$.
\end{proof}

% ---------------------------------------------

\section{Analysis of the model}
\label{sec:3}

In this section, we show that there exists a disease free equilibrium
point and four infection equilibrium points. Moreover, we study the
stability of these equilibrium points.

% -----------------

\subsection{Stability of the disease-free equilibrium}
\label{sec:3:1}

System \eqref{sy} has an infection-free equilibrium
$E_f=\left(\dfrac{ \lambda}{d},0,0,0,0\right)$,
corresponding to the maximal level of
healthy CD4$^{+}$ T-cells. In this case, the disease cannot invade
the cell population. By a simple calculation \cite{van-den-Driessche},
the basic reproduction number of \eqref{sy} is given by
\begin{equation*}
R_{0}=\dfrac{ \lambda N \beta}{d\mu }.
\end{equation*}
At any arbitrary point, the Jacobian matrix of the system
\eqref{sy} is given by
\begin{equation*}
J=\left(
\begin{array}{clcrc}
-d- \beta v & 0 & - \beta x & 0 & 0  \\
\beta v& -a-pz & \beta x & -py & 0 \\
0 & aN & -\mu-qw &0 & -qv\\
cyz & cxz & 0 & cxy-h & 0\\
gvw & 0 & gxw & 0 & gxv- \alpha
\end{array}
\right).
\end{equation*}

\begin{proposition}
\mbox {}
\begin{enumerate}
\item The disease-free equilibrium, $E_f $, is locally
asymptotically stable for $R_0 < 1$.
\item The disease-free equilibrium, $E_f $, is unstable for $R_0 > 1$.
\end{enumerate}
\end{proposition}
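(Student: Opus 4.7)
The plan is to evaluate the Jacobian $J$ at the disease-free equilibrium $E_f=(\lambda/d,0,0,0,0)$ and show that its spectrum lies in the open left half-plane when $R_0<1$, while at least one eigenvalue has positive real part when $R_0>1$.

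First I would substitute $x=\lambda/d$, $y=v=z=w=0$ into the general Jacobian displayed above. Several entries vanish, and the resulting matrix becomes block-triangular: the $x$-row decouples (since $\beta v$ and derivatives with respect to $y,z,w$ are zero on that row in a useful way), and so do the $z$-row and the $w$-row (because $cyz$, $cxz$, $gvw$, $gxw$ all vanish at $E_f$). This immediately yields three eigenvalues read off the diagonal, namely $-d$, $-h$, and $-\alpha$, all negative since the model parameters are nonnegative and the decay rates are positive.

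The remaining spectrum comes from the $2\times 2$ block coupling the $y$ and $v$ components,
\begin{equation*}
A=\begin{pmatrix} -a & \beta\lambda/d \\ aN & -\mu \end{pmatrix}.
\end{equation*}
I would compute $\operatorname{tr}(A)=-(a+\mu)<0$ and $\det(A)=a\mu-aN\beta\lambda/d=a\mu(1-R_0)$, using the definition $R_0=\lambda N\beta/(d\mu)$. For $R_0<1$ the determinant is positive and the trace is negative, so both eigenvalues of $A$ have negative real part (Routh--Hurwitz for a $2\times 2$ matrix, or equivalently the characteristic polynomial $\sigma^2+(a+\mu)\sigma+a\mu(1-R_0)=0$ has roots with negative real part). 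Combined with the three explicit negative eigenvalues, this gives local asymptotic stability of $E_f$ when $R_0<1$. For $R_0>1$, the determinant becomes negative, so $A$ has one positive and one negative real eigenvalue, producing a positive eigenvalue of $J(E_f)$ and hence instability.

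There is no serious obstacle here: the argument is purely a linearization computation, and the only point that needs a little care is verifying the block-triangular structure of $J(E_f)$ so that the three easy eigenvalues really do decouple from the $(y,v)$ block. After that, the dichotomy $R_0\lessgtr 1$ is an immediate consequence of the sign of $\det(A)$.
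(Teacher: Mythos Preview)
Your proposal is correct and follows essentially the same approach as the paper: linearize at $E_f$, factor the characteristic polynomial as $(\xi+d)(\xi+h)(\xi+\alpha)$ times the quadratic $\xi^2+(a+\mu)\xi+a\mu(1-R_0)$ coming from the $(y,v)$ block, and read off stability from the sign of $1-R_0$. The paper writes out the two roots of the quadratic explicitly rather than invoking the trace/determinant criterion, but the argument is otherwise identical.
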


\begin{proof}
At the disease-free equilibrium, $E_f $, the Jacobian matrix  is
given as follows:
\begin{equation*}
J_{E_f}=\left(
\begin{array}{clcrc}
-d & 0 & -\dfrac{\beta \lambda}{d} & 0&0  \\
0& -a & \dfrac{\beta \lambda}{d} & 0&0 \\
0 &aN & -\mu &0&0\\
0&0&0&-h&0\\
0 & 0&0&0&-\alpha
\end{array}
\right).
\end{equation*}
The characteristic polynomial of $J_{E_f}$ is
$$
P_{E_f}(\xi)
=(\xi+d)(\xi+ \alpha)(\xi+h)[\xi^2+(a+\mu)\xi+a\mu(1-R_0)]
$$
and the eigenvalues of the matrix $J_{E_f}$ are
\begin{equation}
\label{PE}
\begin{aligned}
\xi_1&=-d,& \\ \nonumber
\xi_2&=- \alpha,&\\
\xi_3&=-h,&\\
\xi_4&=\displaystyle\dfrac{-(a+\mu)-\sqrt{(a+\mu)^2-4a\mu(1-R_0)}}{2},&\\
\xi_5&=\displaystyle\dfrac{-(a+\mu)+\sqrt{(a+\mu)^2-4a\mu(1-R_0)}}{2}.&
\end{aligned}
\end{equation}
It is clear that $\xi_1$, $\xi_2$, $\xi_3$ and $\xi_4$ are negative.
Moreover, $\xi_5$ is negative when $R_0 < 1$, which means that $E_f$
is locally asymptotically stable.
\end{proof}

% -----------------------------------------

\subsection{Infection steady states}
\label{sec:3:2}

We now focus on the existence and stability of the
infection steady states. All these steady states exist when the
basic reproduction number exceeds the unity and the disease
invasion is always possible. In fact, it is easily verified
that the system \eqref{sy} has four of them:
$$
E_1=\left(\dfrac{\mu}{ \beta N},
\frac{d \mu (R_{0}-1)}{aN \beta},\frac{d(R_{0}-1)}{\beta},0,0\right),
$$
$$
E_2=\left(\frac{ \lambda \mu c- aN \beta h}{d \mu c},
\frac{dh \mu}{\lambda \mu c- aN \beta h},
\dfrac{aNdh}{\lambda \mu c- aN \beta h},\dfrac{a}{p}(R^{CTL}-1),0\right),
$$
$$
E_3=\left(\frac{\lambda g- \alpha \beta }{dg}, \frac{\alpha \beta}{ag},
\frac{\alpha d}{\lambda g- \alpha \beta},0,\frac{\mu}{q}(R^{W}-1)\right),
$$
$$
E_4=\left(\frac{\lambda g- \alpha \beta }{dg},\dfrac{hdg}{c(\lambda g- \alpha \beta)},
\dfrac{\alpha d}{(\lambda g- \alpha \beta)},\frac{a}{p}(R^{CTL,W}_{2}-1),
\frac{\mu}{q}(R^{CTL,W}_{1}-1)\right).
$$
Here the endemic equilibrium point $E_1$ represents the equilibrium
case in the absence of the adaptive immune response (CTLs and
antibody responses). The endemic equilibria points $E_2$ and $E_3$
represent the equilibrium case in the presence of only one kind of
the adaptive immune response, antibody response and CTL response,
respectively, while the last endemic equilibrium point $E_4$
represents the equilibrium case of chronic HIV infection with the
presence of both kinds of adaptive immune response, CTLs and antibody.
In order to study the local stability of the points $E_1$,
$E_2$, $E_3$ and $E_4$, we first define the following numbers:
$$
R^{CTL}= \frac{N \beta}{\mu}\left(\frac{\lambda \mu c
- aN\beta h}{d \mu c}\right),
$$
where $R^{CTL}$ represents the reproduction number
in presence of CTL immune response,
$$
R^{W}= \frac{N \beta(\lambda g- \alpha \beta)}{\mu dg},
$$
where $R^{W}$ represents the reproduction number
in presence of antibody immune response,
$$
R^{CTL,W}_{1}= \frac{aNhg}{ \alpha \mu c}
\quad \text{ and } \quad
R^{CTL,W}_{2}= \frac{\alpha \beta c(\lambda g- \alpha \beta)}{ahdg^{2}},
$$
where $R^{CTL,W}_{1}$ and $R^{CTL,W}_{2}$ represent the reproduction
number in presence of antibody immune response and CTL immune
response, respectively. For the first point $E_1$,
we have the following result.

\begin{proposition}
\begin{enumerate}
\item If $R_0<1$, then the point $E_1$ does not exist.
\item If $R_0=1$, then $E_1=E_f$.
\item If $R_0>1$, then $E_1$ is locally asymptotically stable for
$R^{W}<1$ and $R^{CTL}<1$. However, it is unstable
for $R^{W}>1$ or $R^{CTL}>1$.
\end{enumerate}
\end{proposition}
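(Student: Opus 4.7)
Parts (1) and (2) follow by direct inspection of the closed form of $E_1$. The $y$- and $v$-coordinates of $E_1$ carry a factor $R_0 - 1$, so for $R_0 < 1$ they are negative and $E_1 \notin \rr_+^5$, meaning $E_1$ has no biological meaning. For $R_0 = 1$ these two coordinates vanish, and the identity $R_0 = 1 \iff \lambda N \beta = d\mu \iff \mu/(\beta N) = \lambda/d$ shows that the $x$-coordinate of $E_1$ coincides with $\lambda/d$, so $E_1 = E_f$.

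For part (3), the plan is to evaluate the Jacobian $J$ of Section~\ref{sec:3:1} at $E_1=(x_1,y_1,v_1,0,0)$. Because $z_1=w_1=0$, the fourth and fifth rows collapse to the diagonal entries $cx_1y_1-h$ and $gx_1v_1-\alpha$, so $J_{E_1}$ is block lower-triangular with a $3\times 3$ upper-left block in the $(x,y,v)$ variables plus two scalar blocks that immediately furnish eigenvalues $\xi_4 = cx_1y_1-h$ and $\xi_5 = gx_1v_1-\alpha$. Substituting $x_1 = \mu/(\beta N)$, $y_1 = d\mu(R_0-1)/(aN\beta)$, $v_1 = d(R_0-1)/\beta$ and comparing with the definitions of $R^{CTL}$ and $R^{W}$, a short algebraic rearrangement yields the equivalences $cx_1y_1 - h < 0 \iff R^{CTL} < 1$ and $gx_1v_1 - \alpha < 0 \iff R^{W} < 1$.

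It remains to show that the three eigenvalues of the $(x,y,v)$-block lie in the open left half-plane whenever $R_0 > 1$. Using $\beta N x_1 = \mu$, the characteristic polynomial of this block simplifies to a cubic $\xi^3 + a_2\xi^2 + a_1\xi + a_0 = 0$ with
\begin{equation*}
a_2 = a+\mu+d+\beta v_1, \qquad a_1 = (a+\mu)(d+\beta v_1), \qquad a_0 = a\mu\beta v_1,
\end{equation*}
all strictly positive when $v_1 > 0$. By Routh--Hurwitz, local stability of the block reduces to the single inequality $a_2 a_1 > a_0$, which follows at once from $a_2 \geq a+\mu$, $(a+\mu)^2 \geq 4a\mu$ and $d+\beta v_1 \geq \beta v_1$, giving $a_2 a_1 \geq (a+\mu)^2 \beta v_1 \geq 4a\mu\beta v_1 > a_0$. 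Combining the two pieces, $E_1$ is locally asymptotically stable exactly when $R^{CTL} < 1$ and $R^{W} < 1$, and unstable as soon as one of the two scalar eigenvalues turns positive. The only step that is not pure bookkeeping is the Routh--Hurwitz check on the cubic, but the AM--GM-style bound above makes it essentially immediate.
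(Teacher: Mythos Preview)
Your proof is correct and follows essentially the same route as the paper: factor the characteristic polynomial at $E_1$ into two scalar eigenvalues governed by $R^{CTL}$ and $R^{W}$ and a cubic in the $(x,y,v)$ block, then apply Routh--Hurwitz to the cubic. The only cosmetic differences are that you keep the cubic coefficients in terms of $\beta v_1$ while the paper substitutes $d+\beta v_1=dR_0$, and you supply an explicit AM--GM-type bound for $a_2a_1>a_0$ where the paper simply asserts ${\mathcal A}_1{\mathcal B}_1-{\mathcal C}_1>0$.
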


\begin{proof}
Let $\lambda \mu c- aN \beta h >0$. It is easy to
see that if $R_0 < 1$, then the point $E_1$ does not exist and if
$R_0 = 1$, then the two points $E_1$ and $E_f$ coincide.
If $R_0 > 1$, then the Jacobian matrix at $E_1$ is given by
\begin{equation*}
J_{E_1}=\left(
\begin{array}{clcrc}
-d-\beta v_1 & 0 & -\beta x_1 & 0&0  \\
 \beta v_1& -a & \beta x_1 &-py_{1}& 0 \\
0 & aN& -\mu &0&-qv_{1}\\
0 & 0 & 0 &cx_{1}y_{1}-h&0\\
0 & 0&0&0&gx_{1}v_{1}- \alpha
\end{array}
\right).
\end{equation*}
Its characteristic equation is
\begin{equation*}
(cx_{1}y_{1}-h-\xi)(gx_{1}v_{1}- \alpha - \xi)
(\xi^3+{\mathcal A}_{1}\xi^2+{\mathcal B}_{1}\xi+{\mathcal C}_{1})=0,
\end{equation*}
where
\begin{equation*}
\begin{aligned}
{\mathcal A}_{1}&=a+ \mu +dR_{0},& \\
{\mathcal B}_{1}&=ad+ \mu dR_{0}+ad(R_{0}-1),&\\
{\mathcal C}_{1}&=ad \mu(R_{0}-1).&\\
\end{aligned}
\end{equation*}
Direct calculations lead to
$$
gx_1v_1- \alpha = D_{1}(R^{W}-1)
\mbox{ ~and~ } cx_{1}y_{1}-h= D_{2}(R^{CTL}-1)
$$
with
$$
D_{1}=\frac{dg \mu}{N \beta^{2}} \mbox{ ~and~ }
D_{2}=\frac{dc \mu^{2}}{aN^{2} \beta^{2}}.
$$
The sign of the eigenvalue $D_{1}(R^{W}-1)$ is negative if
$R^{W}<1$, zero if $R^{W}=1$, and positive if $R^{W}>1$.
The sign of the eigenvalue $D_{2}(R^{CTL}-1)$ is negative
if $R^{CTL}<1$, zero if $R^{CTL}=1$, and positive
if $R^{CTL}>1$. On the other hand, we have
${\mathcal A}_{1}>0$ and ${\mathcal A}_{1}{\mathcal B}_{1}-{\mathcal C}_{1}>0$
(as $R_0>1$). From the Routh--Hurwitz theorem \cite{Gradshteyn},
the other eigenvalues of the above matrix have negative real parts.
Consequently, $E_1$ is unstable when $R^{W} > 1$ or $R^{CTL} > 1$
and locally asymptotically stable when $R_0 > 1$,
$R^{W} < 1$, and $R^{CTL} < 1$.
\end{proof}

For the second endemic-equilibrium point $E_2$, we have the
following result.

\begin{proposition}
\begin{enumerate}
\item If $ R^{CTL} < 1 $ , then the point $ E_{2} $ does not exists
and $ E_{2} = E_{1} $ when $R^{CTL}= 1 $.
\item If $ R^{CTL} > 1 $ and $R^{CTL,W}_{1}\leq 1$, then
$E_{2}$ is locally asymptotically stable.
\item If $ R^{CTL} > 1 $ and $R^{CTL,W}_{1}>1$,
then $ E_{2} $ is unstable.
\end{enumerate}
\end{proposition}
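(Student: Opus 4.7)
The plan is to mirror the analysis already performed for $E_f$ and $E_1$. I would split the argument into the existence part~(1) and the stability parts~(2) and~(3), and within the stability analysis exploit the fact that $w_2=0$ at $E_2$ in order to peel off one eigenvalue of the Jacobian immediately.

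For part~(1), the CTL-coordinate of $E_2$ is $z_2=\frac{a}{p}(R^{CTL}-1)$. If $R^{CTL}<1$, then $z_2<0$, so $E_2$ falls outside the biologically meaningful region $\rr_{+}^{5}$; this is what is meant by ``does not exist.'' If $R^{CTL}=1$, then $z_2=0$ and, using $R^{CTL}=N\beta x_2/\mu$, one obtains $x_2=\mu/(\beta N)$, after which a direct substitution shows that the remaining coordinates of $E_2$ coincide with those of $E_1$.

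For parts~(2) and~(3), I would evaluate the Jacobian $J$ at $E_2$, simplifying with the equilibrium identities $cx_2y_2=h$ (from $\dot z=0$ with $z_2\neq 0$), $aNy_2=\mu v_2$ (from $\dot v=0$ with $w_2=0$), and $\beta x_2 v_2=(a+pz_2)y_2$ (from $\dot y=0$). Because $w_2=0$, the fifth row and column of $J_{E_2}$ decouple and yield the explicit eigenvalue $\xi_{5}=gx_2v_2-\alpha$. A short computation with the explicit coordinates of $E_2$ gives $gx_2v_2=gaNh/(\mu c)$, hence $\xi_{5}=\alpha(R^{CTL,W}_{1}-1)$. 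This single eigenvalue already proves instability in part~(3) and, when $R^{CTL,W}_{1}<1$, contributes a strictly negative eigenvalue toward part~(2).

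It then remains to analyze the $4\times 4$ upper-left block acting on $(x,y,v,z)$, whose $(4,4)$-entry vanishes thanks to $cx_2y_2=h$. I would write its characteristic polynomial as a quartic $\xi^{4}+a_{1}\xi^{3}+a_{2}\xi^{2}+a_{3}\xi+a_{4}$, re-express the coefficients using the equilibrium identities above, and verify the Routh--Hurwitz conditions $a_{1},a_{2},a_{3},a_{4}>0$ together with $a_{1}a_{2}a_{3}>a_{3}^{2}+a_{1}^{2}a_{4}$. The constant term $a_{4}$ should reduce, modulo the equilibrium identities, to a positive multiple of $z_2$, which is exactly where the assumption $R^{CTL}>1$ enters. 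The main obstacle is this Routh--Hurwitz step: showing, by careful bookkeeping, that the quartic's coefficients depend only on the model parameters and on $z_2$, and that the Hurwitz determinantal inequality holds with no restriction on $R^{CTL,W}_{1}$. Once this verification is in hand, combining the four roots of the quartic with $\xi_{5}$ yields local asymptotic stability when $R^{CTL}>1$ and $R^{CTL,W}_{1}\leq 1$, and instability as soon as $R^{CTL,W}_{1}>1$.
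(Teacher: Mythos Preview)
Your proposal is correct and follows essentially the same approach as the paper: factor off the eigenvalue $gx_2v_2-\alpha=\alpha(R^{CTL,W}_1-1)$ from the Jacobian at $E_2$ (exploiting $w_2=0$), then apply Routh--Hurwitz to the remaining quartic under the hypothesis $R^{CTL}>1$. One minor wording slip: the fifth \emph{column} of $J_{E_2}$ does not fully decouple (it still carries the entry $-qv_2$ in the third row), but since the fifth \emph{row} vanishes off the diagonal the matrix is block upper-triangular and your factorization goes through unchanged.
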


\begin{proof}
Let $\lambda \mu c- aN \beta h >0$. If $ R^{CTL} < 1$, then the
point $E_{2}$ does not exists and $E_{2} = E_{1}$ when
$R^{CTL} = 1$. We assume that $R^{CTL} > 1$.
The Jacobian matrix of $E_{2}$ is given as follows:
\begin{equation*}
J_{E_{2}}=\left(
\begin{array}{clcrc}
-d-\beta v_2& 0 & -\beta x_{2} & 0 &0 \\
\beta v_2& -a-pz_{2} & \beta x_{2}& -py_{2}&0 \\
0 &aN & -\mu &0&-qv_{2}\\
cy_{2}z_{2} & cx_{2}z_{2} &0& cx_{2}y_{2}-h&0\\
0 & 0&0&0&gx_{2}v_{2}- \alpha
\end{array}
\right).
\end{equation*}
The characteristic equation of the system \eqref{sy} at the point
$E_{2}$ is given by
$$
(gx_{2}v_{2}- \alpha- \xi)
(\xi^4+{\mathcal A}_{2} \xi^3+{\mathcal B}_{2}\xi^2+{\mathcal C}_{2}\xi+{\mathcal D}_{2})=0,
$$
where
\begin{equation*}
\begin{aligned}
{\mathcal A}_{2}
&= d+a+ \mu +\beta v_{2}+pz_{2},&\\
{\mathcal B}_{2}
&= (d+ \beta v_{2})(a+ \mu)+a \mu
+ pz_{2}(d+ \mu +h + \beta v_{2})- aN \beta x_{2},&\\
{\mathcal C}_{2}
&=a \mu (d+ \beta v_{2})+ pz_{2}( \mu d+hd + \mu h+ \mu \beta v_{2}
+h \beta v_{2})- aN \beta d x_{2},&\\
{\mathcal D}_{2}
&= pz_{2}( \mu hd+ \mu h \beta v_{2}- aN \beta hy_{2}).&\\
\end{aligned}
\end{equation*}
Simple calculations lead to
$$
gx_{2}v_{2}- \alpha= \alpha \big(R^{CTL,W}_{1}-1\big).
$$
Then, $gx_{2}v_{2}- \alpha= \alpha \big(R^{CTL,W}_{1}-1\big)$ is an
eigenvalue of $J_{E_{2}}$. The sign of this eigenvalue is negative
if $ R^{CTL,W}_{1}<1$, null when $ R^{CTL,W}_{1}=1$, and positive if
$R^{CTL,W}_{1}>1$. On the other hand, from the Routh--Hurwitz
theorem, the other eigenvalues of the above matrix have negative
real part when $R^{CTL} > 1$. Consequently, $ E_{2}$ is unstable
when $R^{CTL} > 1$ and $ R^{CTL,W}_{1}>1$ and locally
asymptotically stable when $R^{CTL}> 1$ and $ R^{CTL,W}_{1}<1$.
\end{proof}

For the third endemic-equilibrium point $E_3$,
the following result holds.

\begin{proposition}
\mbox {}
\begin{enumerate}
\item If $R^{W}<1$, then the point $E_3$ does not exist and
$E_3=E_1$ when $R^{W}=1$. \item If $R^{W}>1$, then $E_3$ is locally
asymptotically stable for $R^{CTL,W}_{2} <1$ and unstable if
$R^{CTL,W}_{2}>1$.
\end{enumerate}
\end{proposition}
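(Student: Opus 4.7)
The plan is to follow the same strategy used above for the previous equilibria.

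For part (1), I would observe that the $w$-coordinate of $E_3$ is $\tfrac{\mu}{q}(R^{W}-1)$, which is negative when $R^{W}<1$; since the biologically meaningful phase space is $\rr_{+}^{5}$, the equilibrium is inadmissible in that case. When $R^{W}=1$ the identity $\lambda g-\alpha\beta=\mu dg/(N\beta)$ holds, and substituting this into the remaining coordinates of $E_3$ recovers $x=\mu/(N\beta)$, $y=d\mu(R_{0}-1)/(aN\beta)$, and $v=d(R_{0}-1)/\beta$, which are precisely the coordinates of $E_1$.

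For part (2), I would assume $R^{W}>1$, so that $E_3\in\rr_{+}^{5}$ with $w_3>0$, and evaluate the Jacobian at $E_3$. Two structural observations simplify the analysis: $z_3=0$ makes the fourth row of $J_{E_3}$ vanish off the diagonal entry $cx_3y_3-h$, and the equilibrium relation $gx_3v_3=\alpha$ (forced by $w_3\neq 0$ in the fifth equation of \eqref{sy}) makes the $(5,5)$ entry zero. Expanding along the fourth row factorises the characteristic polynomial as
$$\det(J_{E_3}-\xi I)=(cx_3y_3-h-\xi)\,Q(\xi),$$
where $Q$ is the characteristic polynomial of the remaining $4\times 4$ block on the variables $(x,y,v,w)$. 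A direct substitution shows
$$cx_3y_3-h=\frac{c\alpha\beta(\lambda g-\alpha\beta)}{adg^{2}}-h=h\bigl(R^{CTL,W}_{2}-1\bigr),$$
which handles the dichotomy on $R^{CTL,W}_{2}$; in particular, it produces a positive eigenvalue, hence instability of $E_3$, as soon as $R^{CTL,W}_{2}>1$.

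The main obstacle is to prove that the four roots of $Q(\xi)=\xi^{4}+\mathcal{A}_{3}\xi^{3}+\mathcal{B}_{3}\xi^{2}+\mathcal{C}_{3}\xi+\mathcal{D}_{3}$ all have negative real parts whenever $R^{W}>1$, so that local asymptotic stability holds when $R^{CTL,W}_{2}<1$. I would expand the $4\times 4$ determinant, simplify the coefficients using the equilibrium identities $\beta x_3 v_3=ay_3$, $aNy_3=(\mu+qw_3)v_3$, and $gx_3 v_3=\alpha$, and then invoke the Routh--Hurwitz criterion for a quartic: positivity of the four coefficients together with the Hurwitz inequality $\mathcal{A}_{3}\mathcal{B}_{3}\mathcal{C}_{3}>\mathcal{C}_{3}^{2}+\mathcal{A}_{3}^{2}\mathcal{D}_{3}$. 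Each positivity is ultimately driven by the strict inequality $w_3>0$, i.e.\ by the hypothesis $R^{W}>1$; the algebra is lengthy but every term carries a definite sign, and combining this with the isolated eigenvalue above yields the proposition exactly as in the proofs of the preceding equilibria.
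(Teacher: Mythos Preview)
Your proposal is correct and follows essentially the same route as the paper: factor off the eigenvalue $cx_3y_3-h=h(R^{CTL,W}_{2}-1)$ by expanding along the fourth row (since $z_3=0$), then apply Routh--Hurwitz to the residual quartic $Q(\xi)$. In fact your outline is more explicit than the paper's own proof, which simply asserts that the Routh--Hurwitz conditions hold for $Q$ when $R^{W}>1$ without writing down or checking the inequalities; your plan to exploit the equilibrium identities $\beta x_3v_3=ay_3$, $aNy_3=(\mu+qw_3)v_3$, and $gx_3v_3=\alpha$ to simplify the coefficients is the natural way to carry this out.
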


\begin{proof}
It is clear that when $R^{W} < 1$ the point $E_{3}$
does not exist and, if $R^{W} = 1$, then $E_{3} = E_{1}$. We assume
that $R^{W}> 1$. The Jacobian matrix of the system at point
$E_{3}$ is given by
\begin{equation*}
J_{E_{3}}=\left(
\begin{array}{clcrc}
-d- \beta v_{3}& 0 & -\beta x_{3} & 0 &0 \\
\beta v_{3}& -a & \beta x_{3}& -py_{3}&0 \\
0 & aN & -\mu-qw_{3} &0&-qv_{3}\\
0 & 0 & 0 &cx_{3}y_{3}-h&0\\
gv_{3}w_{3} & 0 & gx_{3}w_{3} & 0 & gx_{3}v_{3}- \alpha
\end{array}
\right).
\end{equation*}
The characteristic equation associated with $J_{E_3}$ is given by
\begin{equation*}
(cx_{3}y_{3}-h- \xi)(\xi^4+{\mathcal A}_{3}\xi^3
+{\mathcal B}_{3}\xi^2+{\mathcal C}_{3}\xi+{\mathcal D}_{3})=0,
\end{equation*}
where
\begin{equation*}
\begin{aligned}
{\mathcal A}_{3}&= a+d+ \mu + \beta v_{3} +qw_{3},& \\
{\mathcal B}_{3}&= (d+ \beta v_{3})(a+ \mu)+ a \mu
+(d+a+ \alpha+ \beta v_{3})qw_{3}-aN \beta x_{3},&\\
{\mathcal C}_{3}&= a \mu (d+ \beta v_{3})+(ad+ \alpha d
+ a \alpha+ a \beta v_{3})qw_{3}-aNd \beta x_{3} ,&\\
{\mathcal D}_{3}&= ad \alpha qw_{3}.&\\
\end{aligned}
\end{equation*}
Here $cx_{3}y_{3}-h$ is an eigenvalue of $J_{E_{3}}$. By assuming
$cx_{3}y_{3}-h= h\left(R^{CTL,W}_{2}-1\right)$,
we deduce that the sign of this
eigenvalue is negative when $R^{CTL,W}_{2}<1$, zero when
$R^{CTL,W}_{2}=1$, and positive for $R^{CTL,W}_{2}>1$. On the other
hand, from the Routh--Hurwitz theorem, the other eigenvalues of the
above matrix have negative real parts when $R^{W}>1$. Consequently,
$E_{3}$ is unstable when $R^{CTL,W}_{2}>1$ and locally
asymptotically stable when $R^{W}>1$ and $R^{CTL,W}_{2}<1$.
\end{proof}

For the last endemic-equilibrium point $E_4$, we prove the following result.

\begin{proposition}\
\label{prop4}
\begin{enumerate}
\item If $R^{CTL,W}_{1}<1$ or $R^{CTL,W}_{2} < 1$, then
the point $E_{4}$ does not exists. Moreover, $E_{4}=E_{3}$ when
$R^{CTL,W}_{2}=1$ and $E_{4}=E_{2}$ when $R^{CTL,W}_{1}=1$.
\item If $R^{CTL,W}_{1}> 1$ and $R^{CTL,W}_{2}> 1$,
then $E_{4}$ is locally asymptotically stable.
\end{enumerate}
\end{proposition}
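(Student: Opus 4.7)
My plan is to treat existence and stability separately. For part (1), I would read existence off the explicit coordinates of $E_4$: the $z$-component is $\tfrac{a}{p}(R^{CTL,W}_2 - 1)$ and the $w$-component is $\tfrac{\mu}{q}(R^{CTL,W}_1 - 1)$, so at least one of $z_4, w_4$ is negative as soon as one of the two reproduction numbers drops below unity, whence $E_4 \notin \rr_+^5$. The boundary cases are immediate: $R^{CTL,W}_2 = 1$ forces $z_4 = 0$ and a direct comparison of the remaining four coordinates of $E_4$ and $E_3$ shows they coincide; symmetrically, $R^{CTL,W}_1 = 1$ gives $w_4 = 0$ and $E_4 = E_2$.

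For part (2), the key structural observation is that under the hypotheses all five coordinates of $E_4$ are strictly positive, so the equilibrium identities $z_4(cx_4 y_4 - h) = 0$ and $w_4(gx_4 v_4 - \alpha) = 0$ force $cx_4 y_4 = h$ and $gx_4 v_4 = \alpha$. Consequently the $(4,4)$ and $(5,5)$ diagonal entries of $J_{E_4}$ both vanish, while $cy_4 z_4$, $cx_4 z_4$, $gv_4 w_4$, $gx_4 w_4$ are strictly positive. Unlike at $E_1, E_2, E_3$, no one-dimensional block decouples from the rest, and the characteristic polynomial of $J_{E_4}$ is a genuine quintic of the form $\xi^5 + {\mathcal A}_4\xi^4 + {\mathcal B}_4\xi^3 + {\mathcal C}_4\xi^2 + {\mathcal D}_4\xi + {\mathcal E}_4$, whose coefficients I would compute by cofactor expansion along the sparse fourth and fifth columns of $\xi I - J_{E_4}$ (each of which contains only a single nonzero off-diagonal entry, namely $py_4$ and $qv_4$).

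Local asymptotic stability would then follow from the Routh--Hurwitz criterion in degree five: positivity of all five coefficients together with the two nontrivial Hurwitz determinant inequalities. To verify them I would organize the contributions by the factors $pz_4$ and $qw_4$; the terms that survive as $qw_4 \to 0$ recover the Routh--Hurwitz inequalities of the quartic already established at $E_2$, those that survive as $pz_4 \to 0$ recover the ones at $E_3$, and the remaining cross terms proportional to $pz_4 \cdot qw_4$ enter with positive prefactors coming from $cy_4 > 0$ and $gv_4 > 0$, so they cannot destroy the sign.

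The main obstacle is purely algebraic: the Hurwitz determinants of a quintic polynomial expand into a large number of monomials in the model parameters, and collecting them into a manifestly positive expression under the hypotheses $R^{CTL,W}_1 > 1$ and $R^{CTL,W}_2 > 1$ requires systematic bookkeeping. Beyond this computational hurdle, the argument follows the same template as the proofs of the preceding three propositions.
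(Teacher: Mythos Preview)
Your proposal is correct and follows essentially the same approach as the paper: read off nonexistence and the boundary identifications from the explicit coordinates of $E_4$, then compute the Jacobian at $E_4$, write its characteristic polynomial as a quintic $\xi^5 + {\mathcal A}_4\xi^4 + \cdots + {\mathcal E}_4$, and appeal to the Routh--Hurwitz criterion. The paper simply asserts the Hurwitz inequalities without verification, so your proposed bookkeeping by $pz_4$ and $qw_4$ (reducing to the already-treated quartics at $E_2$ and $E_3$) is in fact more detailed than what the paper does.
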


\begin{proof}
It is clear that when $R^{CTL,W}_{1}<1$ or $R^{CTL,W}_{2} < 1$ the
point $E_{4}$ does not exists and, if $R^{CTL,W}_{2}=1$, then
$E_{4}=E_{3}$ and $E_{4}=E_{2}$ when $R^{CTL,W}_{1}=1$. We assume
that $R^{CTL,W}_{1}> 1$ and $R^{CTL,W}_{2}> 1$. The Jacobian matrix
of the system at the point $E_{4} $ is given by
\begin{equation}
\label{jb4}
J_{E_{4}}=\left(
\begin{array}{clcrc}
-d- \beta v_{4} & 0 & - \beta x_{4} & 0 & 0  \\
\beta v_{4}& -a-pz_{4} & \beta x_{4} & -py_{4} & 0 \\
0 & aN & -\mu-qw_{4} &0 & -qv_{4}\\
cy_{4}z_{4} & cx_{4}z_{4} & 0 & cx_{4}y_{4}-h & 0\\
gv_{4}w_{4} & 0 & gx_{4}w_{4} & 0 & gx_{4}v_{4}- \alpha
\end{array}
\right).
\end{equation}
The characteristic equation associated with $J_{E_4}$ is given by
$$
\xi^5+{\mathcal A}_{4}\xi^4+{\mathcal B}_{4}\xi^3
+{\mathcal C}_{4}\xi^{2}+{\mathcal D}_{4} \xi + {\mathcal E}_{4}=0,
$$
where
\begin{equation*}
\begin{aligned}
{\mathcal A}_{4}&= a+d+ \mu + \beta v_{4}+ pz_{4}+qw_{4},& \\
{\mathcal B}_{4}&=  (d+ \beta v_{4})(a+ \mu)+a \mu +pz_{4}(d+h
+ \mu + \beta v_{4}+ qw_{4})&\\
&\quad +qw_{4}(d+a+ \alpha+ \beta v_{4})-aN \beta x_{4},&\\
{\mathcal C}_{4}&= a \mu (d+ \beta v_{4})+pz_{4}(d \mu + dh +\mu h
+\mu \beta v_{4}+ h \beta v_{4})&\\
&\quad +qw_{4}(ad+ \alpha d+a \alpha +a \beta v_{4})
+pqz_{4}w_{4}(d+ \alpha +h+ \beta v_{4})-aN \beta dx_{4} ,&\\
{\mathcal D}_{4}&=  ad qw_{4} +pz_{4}(dh \mu +\mu h \beta v_{4}
-aN \beta h y_{4})+pqz_{4}w_{4}(d \alpha+ \alpha \beta v_{4}+h \alpha),&\\
{\mathcal E}_{4}&=  \alpha hd(pqz_{4}w_{4}+ aN \beta v_{4}-aN \beta x_{4}).&\\
\end{aligned}
\end{equation*}
From the Routh--Hurwitz theorem applied to the fifth order
polynomial, the eigenvalues of the Jacobian matrix \eqref{jb4}
have negative real parts since we have ${\mathcal A}_{4}>0$,
${\mathcal A}_{4}{\mathcal B}_{4}>{\mathcal C}_{4}$, ${\mathcal A}_{4}
{\mathcal B}_{4}{\mathcal C}_{4}>{\mathcal A}_{4}^{2}{\mathcal D}_{4}$,
and ${\mathcal A}_{4}{\mathcal B}_{4}{\mathcal C}_{4}{\mathcal D}_{4}
>{\mathcal A}_{4}{\mathcal B}_{4}^{2}{\mathcal E}_{4}
+{\mathcal A}_{4}^{2}{\mathcal D}_{4}^{2}$. Consequently, we obtain
the asymptotic local stability of the endemic point $E_{4}$.
\end{proof}

% ---------------------------------------------

\section{Optimal control}
\label{sec:4}

In this section, we study an optimal control problem by introducing
drug therapy into the model \eqref{sy} and assuming that treatment
reduces the viral replication. Our purpose is to find a treatment
strategy $u(t)$ that maximizes the number of CD4$^+$ T-cells as well
as the number of CTL and antibody immune response, keeping the cost,
measured in terms of chemotherapy strength and a combination of
duration and intensity, as low as possible.

% ----------------------------------------

\subsection{The optimization problem}
\label{sec:4:1}

To apply optimal control theory, we suggest the following
control system with two control variables:
\begin{equation}
\label{sy1}
\left\{
\begin{array}{llll}
\vspace{0.1cm} \displaystyle \frac{dx(t)}{dt}
= \lambda -dx(t)- \beta(1-u_{1}(t)) x(t)v(t), \\
\vspace{0.1cm} \displaystyle \frac{dy(t)}{dt}
= \beta (1-u_{1}(t)) x(t)v(t)- ay(t)- py(t)z(t),\\
\vspace{0.1cm} \displaystyle \frac{dv(t)}{dt}
= aN(1-u_{2}(t))y(t)- \mu v(t)-qv(t)w(t) , \\
\vspace{0.1cm} \displaystyle \frac{dz(t)}{dt}
= cx(t)y(t)z(t) - hz(t),\\
\vspace{0.1cm} \displaystyle \frac{dw(t)}{dt}
= gx(t)v(t)w(t)- \alpha w(t).\\
\end{array}
\right.
\end{equation}
Here, $u_1$ represents the efficiency of drug therapy in blocking
new infection, so that infection rate in the presence of drug is
$(1 - u_1)$; while $u_2$ stands for the efficiency of drug therapy
in inhibiting viral production, such that the virion production
rate under therapy is $(1 - u_2)$.
Our optimization problem consists to maximize the
following objective functional:
\begin{equation}
\label{sy2}
\begin{aligned}
J(u_{1},u_{2})=\int^{t_{f}}_{0}\left\{x(t)+z(t)+w(t)
-\left[\frac{A_{1}}{2}u_{1}^{2}(t)+\frac{A_{2}}{2}u_{2}^{2}(t)\right]\right\}dt,
\end{aligned}
\end{equation}
where $t_{f}$ is the time period of treatment and the positive
constants $A_{1}$ and $A_{2}$ stand for the costs of
the introduced treatment. The two control functions, $u_{1}$
and $u_{2}$, are assumed to be bounded and Lebesgue integrable.
We look for $u_{1}^{*}$ and $u_{2}^{*}$ such that
\begin{equation}
\label{sy3}
\begin{aligned}
J(u_{1}^{*},u_{2}^{*})
=\max\left\{J(u_{1},u_{2}) : (u_{1},u_{2})\in U\right\},
\end{aligned}
\end{equation}
where $U$ is the control set defined by
\begin{equation*}
\begin{aligned}
U=\left\{(u_{1}(\cdot),u_{2}(\cdot)): u_{i}(\cdot)
\  \text{is measurable}, \, 0
\leq u_{i}(t)\leq 1,\, t\in [0,t_{f}],\, i=1,2\right\}.
\end{aligned}
\end{equation*}
Note that it is natural to maximize the number of CTL
and immune response in the optimal control problem.
Indeed, it has been noted clinically
that individuals who maintain a high level of CTLs remain healthy longer.
Therefore, we wish to maximize the number of CTL so as to ensure that
if viral load does rebound, the immune system will be able to handle it.
The best drug treatments should establish this result,
while keeping adverse effects to a minimum.

% ----------------------------------------

\subsection{Existence of an optimal control pair}
\label{sec:4:2}

The existence of the optimal control pair can be directly obtained
using the results in \cite{Fleming,Lukes}. More precisely, we have
the following theorem.

\begin{theorem}
\label{thm:4.1}
There exists an optimal control pair $(u_{1}^{*},u_{2}^{*})\in U$
solution of \eqref{sy1}--\eqref{sy3}.
\end{theorem}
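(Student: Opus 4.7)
The plan is to invoke the classical Fleming--Rishel/Lukes existence theorem cited, which reduces the problem to verifying a short list of structural conditions on the state system \eqref{sy1}, the control set $U$, and the integrand of \eqref{sy2}. Concretely, I would check: (i) the set of admissible pairs (controls together with their associated state trajectories) is nonempty; (ii) $U$ is convex and closed; (iii) the right-hand side of \eqref{sy1} is continuous, bounded above by a linear function in the state and control variables; (iv) the integrand in \eqref{sy2} is concave on $U$; and (v) there exist constants $c_1,c_2>0$ and $\beta>1$ such that the integrand is bounded above by $c_2 - c_1(|u_1|^2+|u_2|^2)^{\beta/2}$.

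For (i), I would note that $U$ contains the constant pair $u_1\equiv u_2\equiv 0$, and the existence/boundedness theory developed in Proposition~\ref{prop1} transfers verbatim to the controlled system \eqref{sy1} because the factors $1-u_i(t)$ lie in $[0,1]$, so the same a priori estimates and local Lipschitz structure guarantee existence of a global nonnegative bounded trajectory for any admissible control. For (ii), $U$ is the product of two copies of the set of measurable functions with values in $[0,1]$, which is convex and closed in the weak-$*$ topology of $L^\infty(0,t_f)$. For (iii), the right-hand side of \eqref{sy1} is polynomial in $(x,y,v,z,w,u_1,u_2)$; using the a priori bounds on $(x,y,v,z,w)$ from Proposition~\ref{prop1}, the nonlinear terms $\beta(1-u_1)xv$, $pyz$, $aN(1-u_2)y$, $qvw$, $cxyz$, $gxvw$ become linearly bounded in $(x,y,v,z,w,u_1,u_2)$ on the invariant region, which is exactly the condition required.

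For (iv), the integrand
\[
L(x,z,w,u_1,u_2) = x + z + w - \tfrac{A_1}{2}u_1^2 - \tfrac{A_2}{2}u_2^2
\]
is affine in the state variables and strictly concave in $(u_1,u_2)$ because its Hessian with respect to the controls is $-\operatorname{diag}(A_1,A_2)$, which is negative definite since $A_1,A_2>0$; in particular $L$ is concave on $U$. For (v), using again that $x,z,w$ are uniformly bounded (say by a constant $M$) on the invariant region from Proposition~\ref{prop1}, we obtain
\[
L \le 3M - \tfrac{\min(A_1,A_2)}{2}\bigl(u_1^2+u_2^2\bigr),
\]
which is precisely the required bound with $c_2 = 3M$, $c_1 = \min(A_1,A_2)/2$, and $\beta=2>1$.

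The only mildly delicate point is that conditions (iii) and (v) demand a uniform, control-independent bound on the states; this is what makes Proposition~\ref{prop1} essential, and one must observe that its proof goes through unchanged after inserting the factors $(1-u_1(t))$ and $(1-u_2(t))$, since these factors are nonnegative and at most one, so every inequality used in that proof is preserved. Once this is recorded, the five hypotheses of the Fleming--Lukes theorem are met and the existence of $(u_1^*,u_2^*)\in U$ attaining the maximum in \eqref{sy3} follows immediately.
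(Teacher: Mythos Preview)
Your proposal is correct and follows essentially the same approach as the paper: both verify the five standard hypotheses $(P_1)$--$(P_5)$ of the Fleming--Rishel/Lukes existence theorem, relying on the a priori bounds of Proposition~\ref{prop1} (extended to the controlled system via $1-u_i\in[0,1]$) and the strict concavity of the quadratic cost in $(u_1,u_2)$. Your write-up is in fact somewhat more detailed than the paper's, but the logical structure is identical.
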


\begin{proof}
To use the existence result in \cite{Fleming}, we first need to
check the following properties:
\begin{enumerate}
\item [$(P_1)$] the set of controls and corresponding state
variables is nonempty;

\item [$(P_2)$] the control set $U$ is convex and closed;

\item [$(P_3)$] the right-hand side of the state system is bounded
by a linear function in the state and control variables;

\item [$(P_4)$] the integrand of the objective functional is
concave on $U$;

\item [$(P_5)$] there exist constants $c_{1}, c_{2} > 0$ and
$\beta > 1$ such that the integrand
$$
L(x,z,w,u_{1},u_{2}) = x + z + w - \left( \frac{A_{1}}{2}u_{1}^{2}
+\frac{A_{2}}{2}u_{2}^{2}\right)
$$
of the objective functional \eqref{sy2} satisfies
\begin{equation*}
L(x,z,w,u_{1},u_{2}) \leq c_{2}-c_{1}(\mid u_{1}\mid^{2} + \mid
u_{2}\mid^{2})^{^{\frac{\beta}{2}}}.
\end{equation*}
\end{enumerate}
Using the result in \cite{Lukes}, we obtain existence of solutions
of system \eqref{sy1}, which gives condition $(P_1)$. The control
set is convex and closed by definition, which gives condition
$(P_2)$. Since our state system is bilinear in $u_{1}$ and $u_{2}$,
the right-hand side of system \eqref{sy1} satisfies condition
$(P_3)$, using the boundedness of solutions. Note that the integrand
of our objective functional is concave. Also, we have the last
needed condition:
\begin{equation*}
L(x,z,w,u_{1},u_{2}) \leq c_{2} -c_{1}\left(\mid u_{1}\mid^{2}
+ \mid u_{2}\mid^{2}\right),
\end{equation*}
where $c_{2}$ depends on the upper bound on $x$, and $c_{1}>0$ since
$A_{1}>0$, $A_{2}>0$. We conclude that there exists an optimal
control pair $(u_{1}^{*},u_{2}^{*})\in U$ such that
$J(u_{1}^{*},u_{2}^{*}) = \displaystyle\max_{(u_{1},u_{2})\in U}
\mathcal{J}(u_{1},u_{2})$. 
\end{proof}

Theorem~\ref{thm:4.1} does not provide a uniqueness 
result for the optimal control problem. The uniqueness of the optimal controls 
is obtained in terms of the unique solution of the optimality system.

% -------------------------------------------------------------------

\subsection{The optimality system}
\label{sec:4:3}

Pontryagin's minimum principle provides necessary optimality conditions for
such optimal control problem \cite{PontryaginMP}. This principle
transforms (\ref{sy1}), (\ref{sy2}) and (\ref{sy3}) into a problem
of minimizing an Hamiltonian, $H$, pointwisely with respect to
$u_{1}$ and $u_{2}$, where
$$
H(t,x,y,v,z,w,u_{1},u_{2},\lambda)=\frac{A_{1}}{2}u_{1}^{2}+
\frac{A_{2}}{2}u_{2}^{2}-x-z-w+\displaystyle\sum_{i=0}^{5}\lambda_{i}f_{i}
$$
with
\begin{equation*}
\left\{
\begin{aligned}
f_{1} &= \lambda-dx- \beta(1-u_{1})xv, \\
f_{2}&= \beta(1-u_{1})xv-ay-pyz, \\
f_{3}&= aN(1-u_{2})y- \mu v-qvw, \\
f_{4}&= cxyz-hz,\\
f_{5}&= gxvw- \alpha w.
\end{aligned}
\right.
\end{equation*}
By applying Pontryagin's minimum principle \cite{PontryaginMP}, we
obtain the following result.

\begin{theorem}
Given optimal controls $u_{1}^{*}$, $u_{2}^{*}$, and solutions $x^{*}$,
$y^{*}$, $v^{*}$, $z^{*}$, and $w^{*}$ of the corresponding state system
\eqref{sy1}, there exists adjoint variables $\lambda_{1}$,
$\lambda_{2}$, $\lambda_{3}$, $\lambda_{4}$, and $\lambda_{5}$
satisfying the equations
\begin{equation}
\label{eq:adj:syst}
\left\{
\begin{array}{ll}
\lambda'_{1}(t)
&=1+\lambda_{1}(t)\big[d+\big(1-u_{1}^{*}(t)\big)\beta v^{*}(t)\big]
-\lambda_{2}(t)(1-u_{1}^{*}(t))\beta v^{*}(t)\\
&\quad - \lambda_{4}(t)cy^{*}(t)z^{*}(t) - \lambda_{5}(t) gv^{*}(t)w^{*}(t),\\
\lambda'_{2}(t)&=\lambda_{2}(t)(a+pz^{*}(t))-\lambda_{3}(t)\big(1-u^{*}_{2}(t)\big)aN
- \lambda_{4}(t)cx^{*}(t)z^{*}(t),\\
\lambda'_{3}(t)&=\lambda_{1}(t)(1-u_{1}^{*}(t))\beta x^{*}(t)
-\lambda_{2}(t)(1-u_{1}^{*}(t)\big)\beta x^{*}(t)+\lambda_{3}(t)(\mu+qw^{*}(t))\\
&\quad -\lambda_{5}(t)gx^{*}(t)w^{*}(t),\\
\lambda'_{4}(t)&=1+\lambda_{2}(t)py^{*}(t)+\lambda_{4}(t)\left[h-cx^{*}(t)y^{*}(t)\right],\\
\lambda'_{5}(t)&=1+\lambda_{3}(t)qv^{*}(t)+\lambda_{5}(t)\left[\alpha-gx^{*}(t)v^{*}(t)\right],\\
\end{array}\right.
\end{equation}
with the transversality conditions
\begin{equation}
\label{eq:TC}
\lambda_{i}(t_{f})=0, \quad i=1,\ldots,5.
\end{equation}
Moreover, the optimal control is given by
\begin{equation}
\label{opt}
\begin{aligned}
u^{*}_{1}(t)&= \min\left(1,\max\left(0,\frac{\beta}{A_{1}}\bigg[
(\lambda_{2}(t)-\lambda_{1}(t)) x^{*}(t)v^{*}(t)\bigg]\right)\right),\\
u^{*}_{2}(t)&= \min\left(1,\max\left(0,\frac{1}{A_{2}}\lambda_{3}(t)aNy^{*}(t)\right)\right).
\end{aligned}
\end{equation}
\end{theorem}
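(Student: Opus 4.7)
The plan is to apply Pontryagin's minimum principle \cite{PontryaginMP} directly to the Mayer--Lagrange problem \eqref{sy1}--\eqref{sy3}. Existence of the optimal pair $(u_1^\ast, u_2^\ast)$ with associated optimal trajectory $(x^\ast, y^\ast, v^\ast, z^\ast, w^\ast)$ is already guaranteed by Theorem~\ref{thm:4.1}, so the principle asserts the existence of an absolutely continuous adjoint vector $\lambda = (\lambda_1,\ldots,\lambda_5)$ such that the optimal trajectory, the adjoint, and the optimal control jointly satisfy the Hamiltonian system
\[
\dot x_i^\ast = \frac{\partial H}{\partial \lambda_i}, \qquad
\dot \lambda_i = -\frac{\partial H}{\partial x_i}, \qquad
H(t,x^\ast,u^\ast,\lambda) = \min_{u \in U} H(t,x^\ast,u,\lambda),
\]
together with transversality conditions $\lambda_i(t_f)=0$ for $i=1,\ldots,5$ coming from the fact that all five terminal states are free and no terminal payoff is present in \eqref{sy2}.

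First I would write $H$ explicitly using the $f_i$ already displayed, recalling the sign convention adopted above (the integrand enters $H$ with opposite sign to the running payoff in \eqref{sy2}, which is why $-x-z-w$ appears in $H$). I would then differentiate $H$ with respect to each state variable in turn and read off $\lambda_i' = -\partial H/\partial x_i$. For instance, $\partial H/\partial x$ produces the terms $-1$, $-\lambda_1[d+(1-u_1)\beta v]$, $\lambda_2(1-u_1)\beta v$, $\lambda_4 c y z$, and $\lambda_5 g v w$, whose negative reproduces the first equation of \eqref{eq:adj:syst}; the remaining four equations are obtained identically by differentiating with respect to $y$, $v$, $z$, $w$. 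The transversality conditions \eqref{eq:TC} then follow because the optimization is over a free endpoint with no Mayer term.

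Next I would use the minimization condition to recover the control law \eqref{opt}. Since $H$ is strictly convex and quadratic in each $u_i$ (the coefficients $A_1/2$ and $A_2/2$ are strictly positive), the pointwise unconstrained minimizers on $\mathbb{R}$ are obtained by setting $\partial H/\partial u_1 = 0$ and $\partial H/\partial u_2 = 0$, which yield
\[
\tilde u_1 = \frac{\beta}{A_1}\bigl(\lambda_2 - \lambda_1\bigr) x^\ast v^\ast, \qquad
\tilde u_2 = \frac{1}{A_2}\,\lambda_3\, a N y^\ast.
\]
Because the admissible set is the box $[0,1]^2$ and the Hamiltonian is separately convex in each $u_i$, projecting onto $[0,1]$ via $\min(1,\max(0,\cdot))$ gives the true constrained minimizer, which is exactly \eqref{opt}.

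The only delicate points, and what I would double-check most carefully, are the bookkeeping of signs (both in the adjoint derivation, where the minus sign in $\lambda_i' = -\partial H/\partial x_i$ interacts with the $-x-z-w$ terms in $H$, and in the switching/projection formula), and the justification that the pointwise projection indeed realizes the minimum over $U$. Once those are verified, the structural result follows directly from Pontryagin's minimum principle without any further analytic obstruction.
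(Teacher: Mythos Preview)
Your proposal is correct and follows essentially the same approach as the paper's own proof: apply Pontryagin's minimum principle, compute $\lambda_i' = -\partial H/\partial x_i$ to obtain the adjoint system and transversality conditions, then set $\partial H/\partial u_i = 0$ and project onto $[0,1]$ to recover the control law. If anything, your write-up is more careful than the paper's, since you explicitly justify the projection step via the strict convexity of $H$ in each $u_i$ and verify the sign bookkeeping in the example computation of $\lambda_1'$.
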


\begin{proof}
The proof of positivity and boundedness of solutions is similar to
the one of Proposition~\ref{prop1}. It is enough to use the fact
that $u_i(\cdot) \in U$, $i=1,2$, which means that
$\|u_i(\cdot)\|_{L^{\infty}} \le 1$. For the rest of the proof,
we remark that the adjoint equations and transversality conditions
are obtained by using the Pontryagin minimum principle
of \cite{PontryaginMP}, from which
\begin{equation*}
\begin{cases}
\lambda'_{1}(t)=-\displaystyle \frac{\partial H}{\partial x},
\qquad &\lambda_{1}(t_{f})=0,\\[0.3cm]
\lambda'_{2}(t)=-\displaystyle \frac{\partial H}{\partial y},
\qquad &\lambda_{2}(t_{f})=0,\\[0.3cm]
\lambda'_{3}(t)=-\displaystyle \frac{\partial H}{\partial v},
\qquad &\lambda_{3}(t_{f})=0,\\[0.3cm]
\lambda'_{4}(t)=-\displaystyle \frac{\partial H}{\partial z},
\qquad &\lambda_{4}(t_{f})=0,\\[0.3cm]
\lambda'_{5}(t)=-\displaystyle \frac{\partial H}{\partial w},
\qquad &\lambda_{5}(t_{f})=0.
\end{cases}
\end{equation*}
From the optimality conditions
$$
\displaystyle \frac{\partial H}{\partial u_{1}}=0
\quad \text{ and } \quad
\displaystyle \frac{\partial H}{\partial u_{2}}=0,
$$
that is,
\begin{gather*}
A_{1}u_{1}(t)+ \lambda_{1}(t) \beta x^{*}(t)v^{*}(t)
-\lambda_{2}(t) \beta x^{*}(t)v^{*}(t)=0,\\
A_{2}u_{2}(t)-aNy^{*}(t) \lambda_{3}(t)= 0,
\end{gather*}
and taking into account the bounds in $U$ for the two controls, one
obtains $u_{1}^{*}$ and $u_{2}^{*}$ in the form \eqref{opt}.
\end{proof}

The optimality system consists of the state system \eqref{sy1}
coupled with the adjoint equations \eqref{eq:adj:syst},
the initial conditions \eqref{eq:IC}, transversality
conditions \eqref{eq:TC}, and the characterization of optimal controls
\eqref{opt}. Precisely, if we substitute the expressions of
$u_{1}^{*}$ and $u_{2}^{*}$ in \eqref{sy1}, then we obtain the
following optimality system:
\begin{equation}
\label{eq:optSyst}
\left\{
\begin{split}
\displaystyle \frac{dx^{*}(t)}{dt}&= \lambda- dx^{*}(t)
-\beta(1-u^{*}_{1}(t)) x^{*}(t) v^{*}(t),\\[0.3cm]
\displaystyle \frac{dy^{*}(t)}{dt}&= \beta(1-u^{*}_{1}(t))
x^{*}(t) v^{*}(t)-ay^{*}(t)-py^{*}(t)z^{*}(t),\\[0.3cm]
\displaystyle \frac{dv^{*}(t)}{dt}&= aN(1-u_{2}^{*}(t))y^{*}(t)
-\mu v^{*}(t)-qv^{*}(t)w^{*}(t),\\[0.3cm]
\displaystyle \frac{dz^{*}(t)}{dt}
&=cx^{*}(t)y^{*}(t)z^{*}(t)-hz^{*}(t),\\[0.3cm]
\displaystyle \frac{dw^{*}(t)}{dt}
&=gx^{*}(t)v^{*}(t)w^{*}(t) -\alpha w^{*}(t),\\[0.3cm]
\displaystyle \frac{d\lambda_{1}(t)}{dt}
&=1+\lambda_{1}(t)\big[d+\big(1-u_{1}^{*}(t)\big)\beta v^{*}(t)\big]
-\lambda_{2}(t)(1-u_{1}^{*}(t)\big)\beta v^{*}(t)\\[0.3cm]
&\qquad - \lambda_{4}(t)cy^{*}(t)z^{*}(t)
- \lambda_{5}(t)g v^{*}(t)w^{*}(t),\\[0.3cm]
\displaystyle \frac{d\lambda_{2}(t)}{dt}
&=\lambda_{2}(t)(a+pz^{*}(t))-\lambda_{3}(t)\big(1-u^{*}_{2}(t)\big)aN
- \lambda_{4}(t)cx^{*}(t)z^{*}(t),\\[0.3cm]
\displaystyle \frac{d\lambda_{3}(t)}{dt}
&=\lambda_{1}(t)(1-u_{1}^{*}(t))\beta
x^{*}(t)-\lambda_{2}(t)(1-u_{1}^{*}(t)\big)\beta x^{*}(t)\\[0.3cm]
&\qquad +\lambda_{3}(t)(\mu+qw^{*}(t))
-\lambda_{5}(t)gx^{*}(t)w^{*}(t),\\[0.3cm]
\displaystyle \frac{d\lambda_{4}(t)}{dt}
&=1+\lambda_{2}(t)py^{*}(t)
+\lambda_{4}(t)\big[h-cx^{*}(t)y^{*}(t)\big],\\[0.3cm]
\displaystyle \frac{d\lambda_{5}(t)}{dt}
&=1+\lambda_{3}(t)qv^{*}(t)+\lambda_{5}(t)\big[\alpha-gx^{*}(t)v^{*}(t)\big],\\[0.3cm]
u^{*}_{1}&=\min\bigg(1,\max\bigg(0,\frac{\beta}{A_{1}}\bigg[(\lambda_{2}(t)
-\lambda_{1}(t)) x^{*}(t)v^{*}(t)\bigg]\bigg)\bigg),\\[0.3cm]
u^{*}_{2}&=\min\bigg(1,\max\bigg(0,\frac{1}{A_{2}}\lambda_{3}(t)
aNy^{*}(t)\bigg)\bigg),\\[0.3cm]
\lambda_{i}(t_{f})&=0, \quad i=1,\ldots,5.
\end{split}\right.
\end{equation}

% ---------------------------------------------

\section{Numerical simulations}
\label{sec:5}

In order to solve the optimality system \eqref{eq:optSyst},
we use a numerical scheme based on forward
and backward finite difference approximations. Precisely, we
implemented Algorithm~\ref{our:alg}.

\begin{algorithm}
\caption{Numerical algorithm for the optimal control problem
\eqref{sy1}--\eqref{sy3}.}\label{our:alg}
\flushleft
\medskip

\underline{Step 1}:

\medskip
$$
x (0) = x_0, \quad y(0) = y_0,
\quad v(0) = v_0,
\quad z(0) = z_0,
\quad w(0) = w_0,
\quad u_1(0) = 0,
$$
$$
u_2(0) = 0,\quad
\lambda_1(t_f) = 0, \quad \lambda_2(t_f) = 0,
\quad \lambda_3(t_f) = 0,
\quad \lambda_4(t_f) =0, \quad \lambda_5(t_f) = 0.
$$

\medskip

\underline{Step 2}:

\medskip

for $i = 0$, \ldots , $n-1$, do:
\begin{equation*}
\begin{split}
x_{i+1} &= x_i + h[\lambda - d x_i - \beta(1-u_{1}^i)  x_i v_i],\\
y_{i+1} &= y_i + h[ \beta (1-u_{1}^i) x_{i-m}v_{i-m}-ay_i-py_i z_i],\\
v_{i+1} &= v_i + h[ aN(1-u_2^i)y_i-\mu v_i-q v_i w_i],\\
z_{i+1} &= z_i + h[ cx_i y_i z_i-hz_i],\\
w_{i+1} &= w_i + h[ gx_iv_i w_i - \alpha w_i],\\
\lambda_1^{n-i-1}
&=  \lambda_1^{n-i} - h[ 1 + \lambda_1^{n-i}(d
+(1-u_{1}^i)\beta v_{i+1})\\
&\quad - \lambda_2^{n-i}(1-u_{1}^i)\beta v_{i+1} - \lambda_4^{n-i}
c y_{i+1} z_{i+1} - \lambda_5^{n-i} g v_{i+1} w_{i+1}],\\
\lambda_2^{n-i-1} &=\lambda_2^{n-i}-h[\lambda_2^{n-i}(a+pz_{i+1})\\
&\quad -\lambda_3^{n-i}(aN(1-u_{2}^i))- \lambda_4^{n-i} c x_{i+1} z_{i+1}],\\
\lambda_3^{n-i-1} &= \lambda_3^{n-i}-h\big[\lambda_1^{n-i}(1-u_1^i)\beta
x_{i+1}-\lambda_2^{n-i}(1-u_{1}^i)\beta x_{i+1}\\
&\quad + \lambda_3^{n-i}( \mu +qw_{i+1})-\lambda_5^{n-i} gx_{i+1}w_{i+1}],\\
\lambda_4^{n-i-1} &= \lambda_4^{n-i} - h[1+ \lambda_2^{n-i}p y_{i+1}
+ \lambda_4^{n-i}(h-cx_{i+1}y_{i+1})],\\
\lambda_5^{n-i-1} &= \lambda_5^{n-i} - h[1+ \lambda_3^{n-i}q v_{i+1}
+ \lambda_5^{n-i}(\alpha-gx_{i+1}v_{i+1})],\\
R_1^{i+1} &= (\beta/A_1)(\lambda_2^{n-i-1}v_{i-m+1}x_{i-m+1}
-\lambda_1^{n-i-1}v_{i+1}x_{i+1}),\\
R_2^{i+1} &= (1/A_2)\lambda_3^{n-i-1}aN y_{i+1},\\
u_1^{i+1} &=\min(1,\max(R_1^{i+1},0)),\\
u_2^{i+1} &=\min(1,\max(R_2^{i+1},0)),
\end{split}
\end{equation*}
end for.

\medskip

\underline{Step 3}:

\medskip

for $i = 1, \ldots , n$, write:
$$
x^*(t_i) = x_i, \quad y^*(t_i) = y_i, \quad v^*(t_i) =v_i, \quad
z^*(t_i) = z_i, \quad w^*(t_i) = w_i,$$ $$ \quad u_1^*(t_i) = u_1^i,
\quad u_2^*(t_i) = u_2^i,
$$

end for.
\end{algorithm}

% ---------------------------------
{\small
\begin{table}
\caption{Parameters, their symbols and meaning, and default values
used in HIV literature.}
\begin{tabular}{c c c c} \hline
Parameters & \multicolumn{1}{c}{Meaning} &  Value &  References \\
\hline \hline $\lambda$ &  \multicolumn{1}{p{4cm}}{\raggedright
source rate of CD4+ T cells}
& $1$--$10$ cells $\mu l^{-1}$ days$^{-1}$ & \cite{crs}\\
$d$ & \multicolumn{1}{p{4cm}}{decay rate of healthy cells}
& $0.007$--$0.1$ days$^{-1}$ & \cite{crs} \\
$\beta$   & \multicolumn{1}{p{4cm}}{rate at which CD4+ T cells
become infected}
& $0.00025$--$0.5$ $\mu l$ virion$^{-1}$ days$^{-1}$   & \cite{crs}\\
$a$ & \multicolumn{1}{p{4cm}}{death rate of infected CD4+ T cells,
not by CTL}
& $0.2$--$0.3$ days$^{-1}$ & \cite{crs} \\
$\mu$  & \multicolumn{1}{p{4cm}}{clearance rate of virus}
& $2.06$--$3.81$ days$^{-1}$ & \cite{per}\\
$N$ & \multicolumn{1}{p{4cm}}{number of virions produced by infected
CD4+ T-cells}
& $6.25$--$23599.9$ virion$^{-1}$ & \cite{5a,32}\\
$p$ & \multicolumn{1}{p{4cm}}{clearance rate of infection}
& $1$--$4.048 \times 10^{-4}$ ml virion days$^{-1}$ & \cite{5a,22} \\
$c$ & \multicolumn{1}{p{4cm}}{activation rate of CTL cells}
& $0.0051$--$3.912$ days$^{-1}$ & \cite{5a} \\
$h$ & \multicolumn{1}{p{4cm}}{death rate of CTL cells}
& $0.004$--$8.087$ days$^{-1}$ & \cite{5a} \\
$q$ & \multicolumn{1}{p{4.5cm}}{Neutralization rate of virions}
& $0.12$ days$^{-1}$ & Assumed \\
$g$ & \multicolumn{1}{p{4.3cm}}{activation rate of antibodies}
& $0.00013$ days$^{-1}$ & Assumed \\
$ \alpha$ & \multicolumn{1}{p{4cm}}{death rate of antibodies} &
$0.12$ days$^{-1}$ & Assumed \\ \hline
\end{tabular}
\label{table:nonlin}
\end{table}}
%------------------------------------------------------

For our numerical simulations, we have chosen the following
parameters (see Table $1$): $\lambda=1$, $d=0.1$,  $\beta=0.00025$,
$p=.001$,  $a=0.2$, $c=0.03$, $N=2000$, $\mu=2.4$, $h=0.2$,
$g=0.00013$, $\alpha=0.12$, $q=0.01$, $A_1=250$, $A_2=2500$.
These parameters show the stability of the last endemic
point $E_4$ with all non-zero system components. The initial
value of each system component is given as follows:
$x_0=5$, $y_0=1$, $v_0=1$, $z_0=2$, and $w_0=1$.
In Fig.~\ref{fig1}, it can be clearly seen that, after introducing
therapy, the uninfected cells population grows significantly
compared with those without control.
% ----------------------
\begin{figure}[H]
\centering
\includegraphics[scale=0.34]{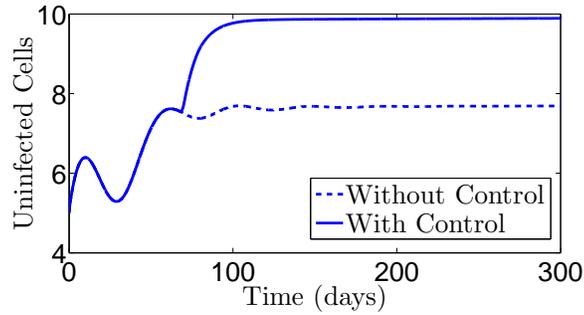}
\caption{The evolution of the uninfected cells during
time.}\label{fig1}
\end{figure}
% ----------------------
Figure~\ref{fig2} shows that,
with control, the number of infected cells are significantly reduced
after few weeks of therapy. Nevertheless, without control, this
number remains much higher.
% ----------------------
\begin{figure}[H]
\centering
\includegraphics[scale=0.34]{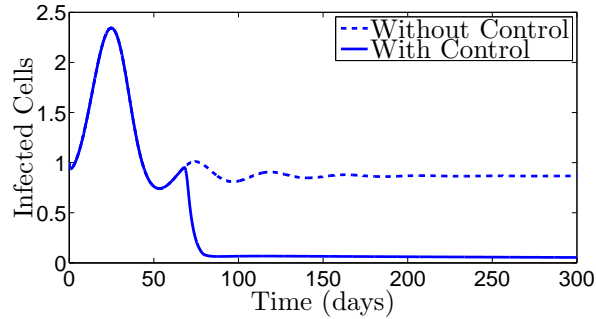}
\caption{The evolution of the infected cells during time.}
\label{fig2}
\end{figure}
% ----------------------
Figure~\ref{fig3} shows that, with
control, the viral load decreases towards a very low level after the
first days of therapy, whereas, without control, it remains much
higher. This indicates the impact of the administrated therapy in
controlling viral replication.
% ----------------------
\begin{figure}[H]
\centering
\includegraphics[scale=0.34]{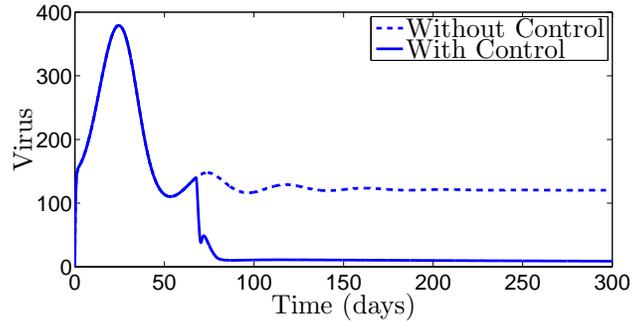}
\caption{The evolution of the HIV virus during time.}\label{fig3}
\end{figure}
% ----------------------
Figures~\ref{fig4} and \ref{fig5}
show the adaptive immune response as function of time. The adaptive
immunity is clearly affected by the control. Their curves converge
towards zero with control, whereas, without any control, it
converges towards $66.2721$ for CTL cells and converge towards
$48.888$ for antibodies immune response.
% ----------------------
\begin{figure}[H]
\centering
\includegraphics[scale=0.34]{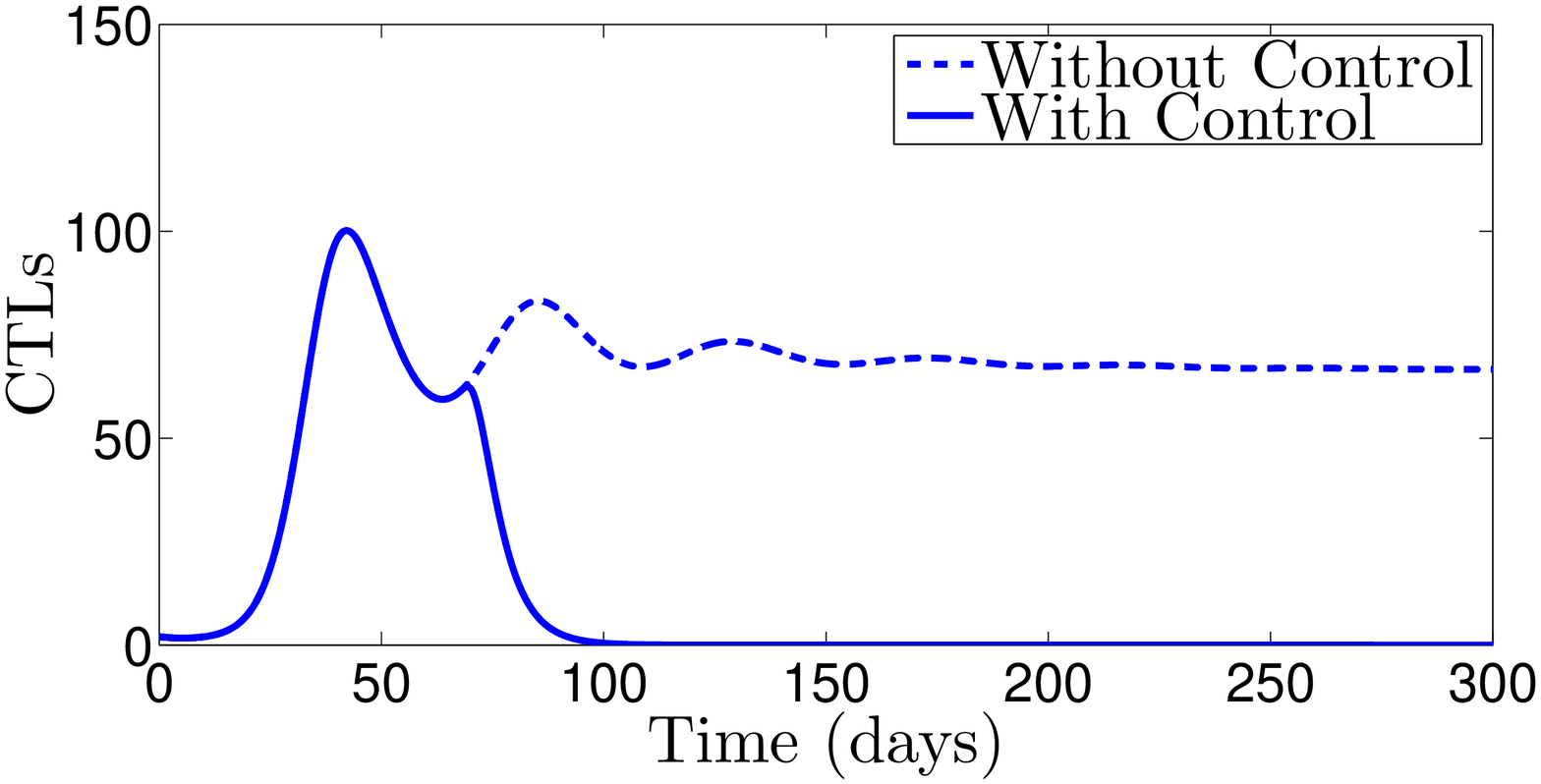}
\caption{The evolution of the CTL cells during time.}\label{fig4}
\end{figure}
% ----------------------
\begin{figure}[H]
\centering
\includegraphics[scale=0.34]{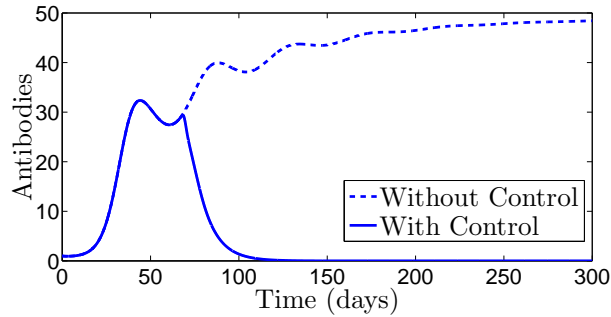}
\caption{The evolution of the antibodies during time.}\label{fig5}
\end{figure}
% ----------------------
We note that all the curves
(without control) of previous figures converge
towards the endemic point with coordinates
$(7.6923,0.8666,120,66.2721,48.888)$. This result is in good
agreement with the result of  Proposition~\ref{prop4}, since with our
chosen parameters we have $R_0 =2.0833 > 1$, $R^{CTL,W}_{1} =
1.2037 > 1$, and $R^{CTL,W}_{2} = 1.3313 > 1$. The behavior of the
two treatments during time is given in Fig.~\ref{fig6}. The curves
present the drug administration schedule during the time of
treatment. This figure shows that we should give more importance to
the first drug (RTIs) than to the second one (PIs).
% ----------------------
\begin{figure}[H]
\centering
\includegraphics[scale=0.31]{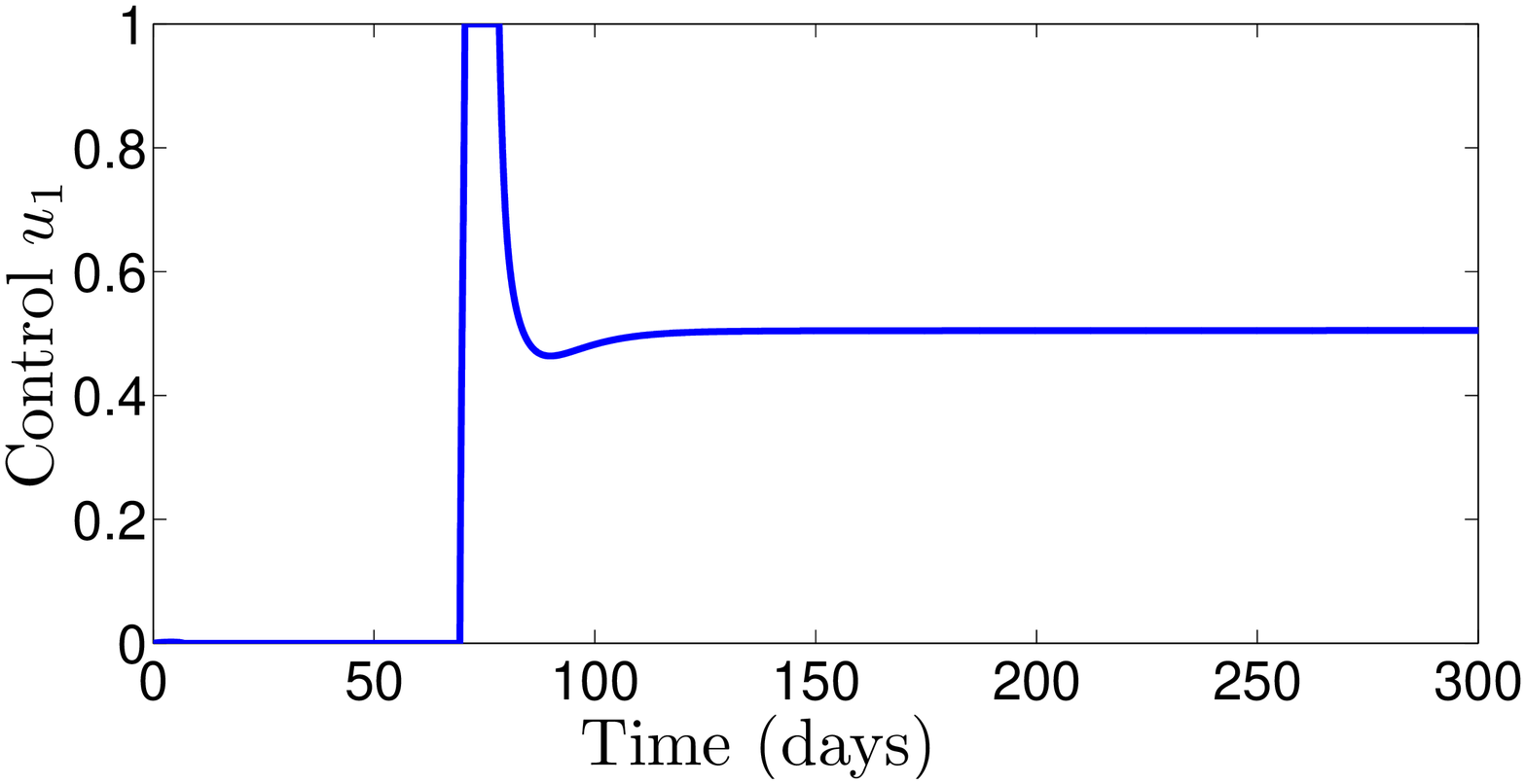}
\includegraphics[scale=0.31]{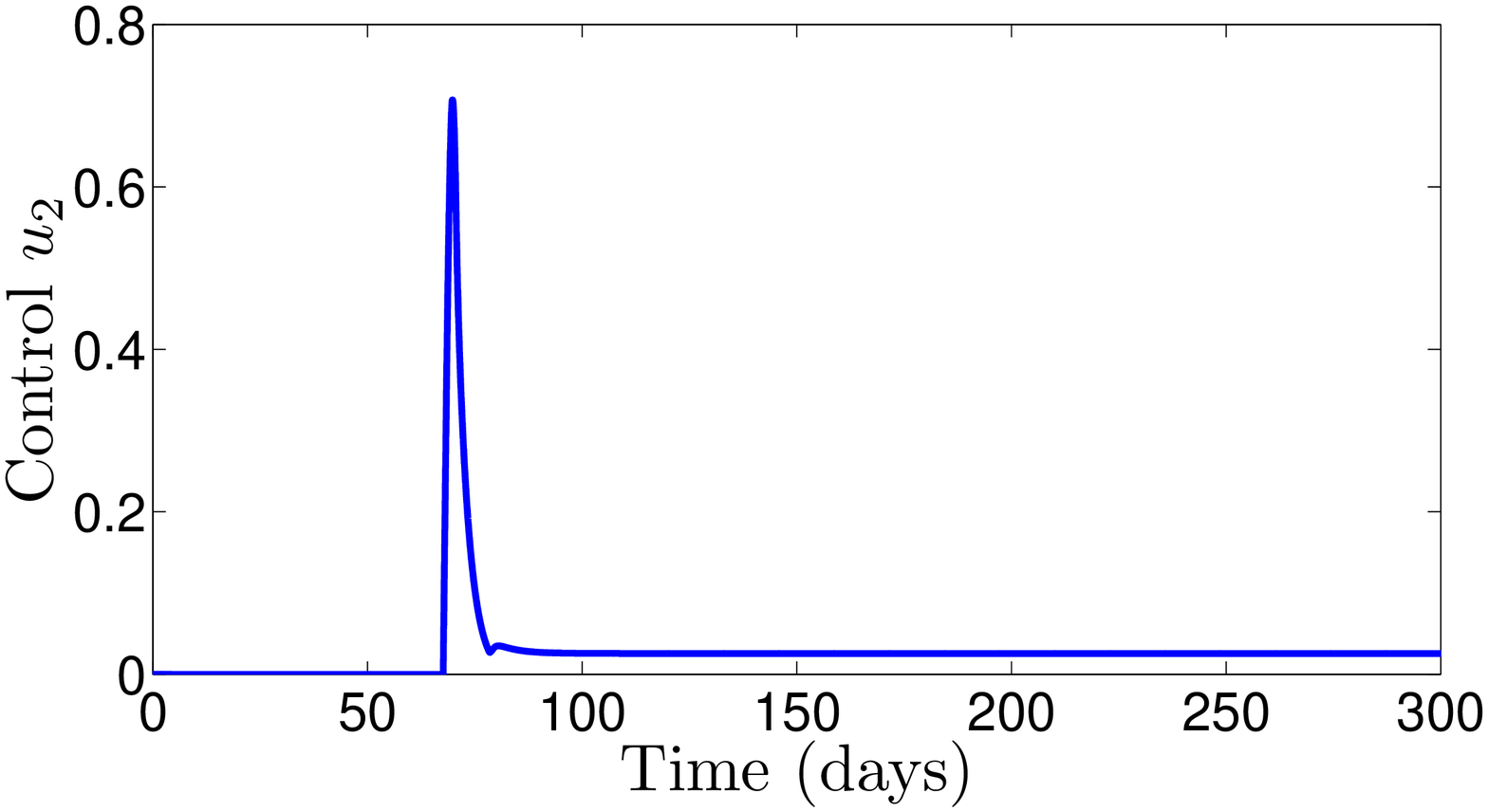}
\caption{The behaviour of the two optimal controls.}\label{fig6}
\end{figure}

% ---------------------------------------------

\section{Conclusion}
\label{sec:6}

In this work, we proposed and studied a mathematical model describing the
human immunodeficiency virus with adaptive immune response and a
trilinear antibody growth function. The main novelty in the model
is to consider that the antibody growth depends not only on the virus
and on the antibodies concentration but also on the uninfected cells
concentration, which is supported by recent medical discoveries.
After proposing the new mathematical model, positivity and boundedness
of solutions were established. Then, local stability of
the disease free steady state and the infection steady states was
investigated. Next, an optimal control problem was proposed and
studied. Two types of treatments were incorporated into the
model: the purpose of the first consists to block the viral
proliferation, while the role of the second one is to prevent new
infections. Finally, numerical simulations were performed, confirming
the stability of the free and endemic equilibria and illustrating
the effectiveness of the two incorporated treatments via optimal
control.

% ---------------------------------------------

\section*{Acknowledgments}

KA and SH would like to thank the Moroccan CNRST ``Centre National
de Recherche Scientique et Technique'' and the French CNRS ``Centre
National de Recherche Scientique'' for the partial financial support
through the project PICS 244832. DFMT is supported by
Funda\c{c}\~ao para a Ci\^encia e a Tecnologia (FCT),
within project UIDB/04106/2020 (CIDMA).
The authors are very grateful to an anonymous referee
for their critical remarks and precious questions, which
helped them to improve the quality and clarity of the manuscript.

% ---------------------------------------------

% ---------------------------------------------

\medskip

Received 01-June-2019; revised 10-May-2021; accepted 21-May-2021.

\medskip

% ---------------------------------------------

\end{document}